\newcommand{\norm}[1]{\left\Vert #1 \right\Vert}
\DeclareMathOperator{\supp}{supp}
\DeclareMathOperator{\trace}{tr}
\newtheorem{theorem}{Theorem}
\newtheorem{proposition}{Proposition}
\newtheorem{lemma}{Lemma}
\theoremstyle{definition}
\theoremstyle{remark}
\title
[
	Stochastic BBM equation 
]
{
	A stochastic Benjamin-Bona-Mahony type equation
}
\author[Dinvay]{Evgueni Dinvay}
\email{ Evgueni.Dinvay@inria.fr }
\address
{
	Inria Rennes - Bretagne Atlantique
	\\
	Campus universitaire de Beaulieu Avenue du G\'en\'eral Leclerc
	\\
	35042 Rennes Cedex
	\\
	France
}
\date{\today}
\subjclass[2010]{35Q53, 35Q60, 60H15} 
\begin{document}

\begin{abstract}
Considered herein is a particular nonlinear dispersive stochastic equation.
It was introduced recently in \cite{Dinvay_Memin2022},
as a model describing surface water waves under location uncertainty.
The corresponding noise term is introduced through a Hamiltonian formulation,
which guarantees the energy conservation of the flow.
Here the initial-value problem is studied.
\end{abstract}

\keywords{
	Water waves, BBM equation, multiplicative noise.
}
\maketitle
\section{Introduction}
\setcounter{equation}{0}

Consideration is given to the following Stratonovich one-dimensional
BBM-type equation
\begin{equation}
\label{Stratonovich_BBM}
	d u
	=
	- \partial_x K
	\left(
		u + K u^2
	\right)
	d t
	+ \sum_j \gamma_j \partial_x
	\left(
		u + K u^2
	\right)
	\circ d W_j
\end{equation} 
introduced in \cite{Dinvay_Memin2022},
as a model describing surface waves of a fluid layer.
It is supplemented with the initial condition
\(
	u(0) = u_0
	.
\)
Equation \eqref{Stratonovich_BBM} has
a Hamiltonian structure with the energy
\begin{equation}
\label{Hamiltonian}
	\mathcal H (u)
	=
	\int_{\mathbb R}
	\left(
		\frac 12 \big( K^{-1/2}u \big)^2 + \frac 13 u^3
	\right)
	dx
	.
\end{equation}
The Fourier multiplier operator $K$,
defined in the space of tempered distributions $\mathcal S'(\mathbb R)$,
has an even symbol of the form
\begin{equation}
\label{K_definition}
	K(\xi) \simeq ( 1 + \xi^2 )^{ - \sigma_0 }
\end{equation}
with $\sigma_0 > 1/2$.
Expression \eqref{K_definition} means that the symbol $K(\xi)$
is bounded from below and above by $\mbox{RHS\eqref{K_definition}}$
multiplied by some positive constants.
In other words the operator $K$ essentially behaves as the Bessel potential
of order $2 \sigma_0$, see \cite{Grafakos2009}.
The space variable is $x \in \mathbb R$ and the time variable
is $t \geqslant 0$.
The unknown $u$ is a real valued function of these
variables and of the probability variable $\omega \in \Omega$,
representing the free surface elevation
in the fluid layer.
The scalar sequence $\{ \gamma_j \}$ satisfies the restriction
\(
	\sum_j \gamma_j^2 < \infty
	,
\)
and $\{ W_j \}$ is a sequence of independent scalar Brownian motions
on a filtered probability space
\(
	\left(
		\Omega, \mathcal F,
		\{ \mathcal F_t \},
		\mathbb P
	\right)
	.
\)

Model \eqref{Stratonovich_BBM} was introduced in \cite{Dinvay_Memin2022},
where an attempt to extend an elegant Hamiltonian formulation
of \cite{Craig_Groves} to the stochastic setting was made.
We will just briefly comment on the methodology of \cite{Dinvay_Memin2022}.
The white noise is firstly introduced via the stochastic transport theory presented
in \cite{Memin2014},
which is based on splitting of fluid particle motion into smooth and random movements.
Then it is restricted to a particular Stratonovich form in order to respect
the energy conservation.
In particular, it provides us with a model having multiplicative noise
of Hamiltonian structure.
Finally, a long wave approximation results in simplified models as \eqref{Stratonovich_BBM},
for example.

One may notice that after discarding the nonlinear terms in Equation \eqref{Stratonovich_BBM},
the details can be seen in \cite{Dinvay_Memin2022},
the corresponding linearised initial-value problem can be solved exactly
with the help of the fundamental multiplier operator
\begin{equation}
\label{general_Duhamel_matrix}
	\mathcal S(t, t_0)
	=
	\exp
	\left[
		- \partial_x K ( t - t_0 ) + \sum_j \gamma_j \partial_x ( W_j(t) - W_j(t_0) )
	\right]
	,
\end{equation}
where $t_0, t \in \mathbb R$.
Note that it can be factorised as
\(
	\mathcal S(t, t_0) = S(t - t_0) S_W(t, t_0)
	,
\)
where $S(t) = \exp( - \partial_x Kt )$ is a unitary semi-group
and $S_W$ containing all the randomness coming from the Wiener process is unitary as well.
They obviously commute as bounded differential operators.
We recall that $S(t)$ is defined via the Fourier transform
\(
	\mathfrak F \left( S(t) \psi \right)
	=
	\exp( - i\xi K(\xi) t ) \widehat \psi(\xi)
\)
for any $\psi \in \mathcal S'(\mathbb R)$ and
\(
	\widehat \psi = \mathfrak F \psi
	.
\)
Similarly, $S_W(t, t_0)$ is defined by the line
\[
	S_W(t, t_0) \psi
	=
	\mathfrak F^{-1}
	\left(
		\xi \mapsto \exp
		\left(
			i\xi \sum_j \gamma_j ( W_j(t) - W_j(t_0) )
		\right) \widehat \psi(\xi)
	 \right)
	.
\]
It allows us to represent \eqref{Stratonovich_BBM} in the Duhamel form
\begin{equation}
\label{general_Duhamel}
	u(t)
	=
	\mathcal S(t, 0)
	\left(
		u_0
		+
		\int_0^t \mathcal S(0, s) f(u(s)) ds
		+
		\sum_j \gamma_j
		\int_0^t \mathcal S(0, s) g(u(s)) dW_j(s)
	\right)
	,
\end{equation}
where
\begin{equation*}
	f(u)
	=
	- \partial_x K^2 u^2
	+ \sum_j \gamma_j^2
	\partial_x K( u \partial_x K u^2)
\end{equation*}
and
\begin{equation*}
	g(u)
	=
	\partial_x K u^2
	.
\end{equation*}
Existence and uniqueness of solution to
Equation \eqref{general_Duhamel} is under consideration.
It is worth to point out that both $S_W$
and the stochastic integral in \eqref{general_Duhamel} are well defined.
Indeed, appealing to Doobs' inequalities for the submartingale
\(
	\left|
		\sum_{j = n}^{n + m} \gamma_j W_j
	\right|
\)
and the It\^o-Nisio theorem one can show that
\(
	\sum_j \gamma_j W_j
\)
converges uniformly in time almost surely, in probability
and in $L^2$ sense.
If the integrand of the stochastic integral in \eqref{general_Duhamel}
is in some Sobolev space $H^{\sigma}( \mathbb R )$ for each $s$ and a.e. $\omega$,
then we can understand this sum of integrals as an integration
with respect to a $Q$-Wiener process associated with
a Hilbert space $H$ and a non-negative symmetric trace class operator
$Q$ having eigenvalues $\gamma_j^2$
and eigenfunctions $e_j$ forming an orthonormal basis in $H$.
Then the corresponding integrand is the unbounded linear operator
between $H$ and $H^{\sigma}( \mathbb R )$ that maps all $e_j$
to the same element of $H^{\sigma}( \mathbb R )$, namely, to
\(
	\mathcal S(0, s) g(u(s))
	.
\)
In particular, it explains why we need
the summability condition
\(
	\sum_j \gamma_j^2 < \infty
	.
\)

Before we formulate the main result it is left
to introduce a notation as follows. 
By
\(
	C(0, T; H^{\sigma}(\mathbb R))
\)
we will notate the space of continuous functions on $[0, T]$
having values in $H^{\sigma}(\mathbb R)$
with the usual supremum norm.

\begin{theorem}
\label{main_theorem}
	Let $\sigma_0 > 1/2$ and $\sigma \geqslant \max \{ \sigma_0, 1 \}$.
	Then for any
	$\mathcal F_0$-measurable
	\(
		u_0 \in
		L^2(\Omega; H^{\sigma}( \mathbb R ))
		\cap
		L^{\infty}(\Omega; H^{\sigma_0}( \mathbb R ))
	\)
	with sufficiently small $L^{\infty}H^{\sigma_0}$-norm
	and any $T_0 > 0$
	Equation \eqref{general_Duhamel} has
	a unique adapted solution
	\(
		u \in L^2( \Omega; C(0, T_0; H^{\sigma}(\mathbb R)) )
		\cap L^{\infty}( \Omega; C(0, T_0; H^{\sigma_0}(\mathbb R)) )
		.
	\)
	Moreover,
	\(
		\mathcal H(u(t)) = \mathcal H(u_0)
	\)
	for each $t \in [0, T_0]$ almost surely on $\Omega$.
\end{theorem}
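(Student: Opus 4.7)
The plan is to run a Banach fixed-point argument for the Duhamel map $\Phi$ defined by the right-hand side of \eqref{general_Duhamel}, combined with a truncation, and then invoke conservation of $\mathcal H$ to globalize the resulting solution to the prescribed interval $[0,T_0]$. The natural workspace is
\[
	X_T
	=
	L^2\bigl(\Omega; C(0,T; H^\sigma(\R))\bigr)
	\cap
	L^\infty\bigl(\Omega; C(0,T; H^{\sigma_0}(\R))\bigr),
\]
restricted to adapted processes. Since $\mathcal S(t,s) = S(t-s) S_W(t,s)$ is a product of unitary Fourier multipliers, the propagator acts isometrically on every $H^\beta$, and the fixed-point estimates reduce to Sobolev bounds for $f(u)$ and $g(u)$. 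The hypothesis $\sigma \geqslant \max\{\sigma_0,1\}$ with $\sigma_0 > 1/2$ provides two key inputs: the Moser product rule $\norm{uv}_{H^\sigma} \leqslant C(\norm{u}_{L^\infty}\norm{v}_{H^\sigma} + \norm{u}_{H^\sigma}\norm{v}_{L^\infty})$ together with the embedding $H^{\sigma_0} \hookrightarrow L^\infty$, and the fact that $\partial_x K$ and $\partial_x K^2$ have symbols of orders $1-2\sigma_0$ and $1-4\sigma_0$, both nonpositive, so they are bounded on every $H^\beta$. Combining these yields bilinear and trilinear Lipschitz bounds of the form
\[
	\norm{g(u) - g(v)}_{H^\sigma}
	\leqslant
	C \bigl( \norm{u}_{H^{\sigma_0}} + \norm{v}_{H^{\sigma_0}} \bigr) \norm{u-v}_{H^\sigma}
	+
	C \bigl( \norm{u}_{H^\sigma} + \norm{v}_{H^\sigma} \bigr) \norm{u-v}_{H^{\sigma_0}},
\]
and analogues for $f$. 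Burkholder--Davis--Gundy applied to the stochastic integral (with $\sum_j \gamma_j^2 < \infty$ ensuring $Q$-Wiener convergence) and Minkowski applied to the drift produce an estimate of the shape $\norm{\Phi(u)}_{X_T} \leqslant \norm{u_0}_{X_0} + C(T + \sqrt{T})\,P(\norm{u}_{X_T})$ with $P$ polynomial. Inserting a cut-off $\theta_R(\norm{u(t)}_{H^{\sigma_0}})$ in the nonlinearities makes them globally Lipschitz and yields, for every $T > 0$, a unique adapted $X_T$-solution $u_R$ to the truncated Duhamel equation.

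To remove the cut-off on $[0,T_0]$, I use conservation of $\mathcal H$. Since $\delta\mathcal H/\delta u = K^{-1}u + u^2$, the drift $-\partial_x K(u + K u^2)$ and the noise coefficients $\gamma_j \partial_x (u + Ku^2)$ equal $-\partial_x K \,\delta\mathcal H/\delta u$ and $\gamma_j \partial_x K \,\delta\mathcal H/\delta u$, and $\partial_x K$ is skew-adjoint on $L^2$ because its symbol $i\xi K(\xi)$ is purely imaginary and odd. Applying Itô's formula in Stratonovich form to $\mathcal H(u_R(t))$, the drift and diffusion pairings $\innerprod{\delta\mathcal H/\delta u}{\partial_x K\,\delta\mathcal H/\delta u}$ vanish pathwise, giving $\mathcal H(u_R(t)) = \mathcal H(u_0)$ as long as the cut-off has not truncated. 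The embedding $H^{\sigma_0} \hookrightarrow L^\infty$ yields the coercivity $\mathcal H(u) \geqslant c \norm{u}_{H^{\sigma_0}}^2 - C\norm{u}_{H^{\sigma_0}}^3$, so for $\norm{u_0}_{L^\infty_\omega H^{\sigma_0}}$ sufficiently small a continuity-in-$t$ argument shows $\norm{u_R(t)}_{H^{\sigma_0}}$ stays strictly below the cut-off threshold throughout $[0,T_0]$ almost surely. Hence $u_R$ actually solves the untruncated equation, and energy conservation holds on the whole interval. The remaining $L^2(\Omega; C H^\sigma)$ control follows from the bilinear estimate above: once $\norm{u}_{H^{\sigma_0}}$ is a priori bounded, the coefficient in the estimate for $\norm{u(t)}_{H^\sigma}^2$ is linear in $\norm{u}_{H^\sigma}^2$, and Gronwall (with a BDG martingale term absorbed into the left-hand side after taking expectation) closes the bound on $[0,T_0]$.

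The principal technical obstacle is the rigorous derivation of $\mathcal H(u(t)) = \mathcal H(u_0)$ at the limited regularity $H^{\sigma_0}$: the Stratonovich Itô formula requires sufficient differentiability in both space and path variables, so one works with smooth approximations (for instance via mollified data in a higher Sobolev space, where solutions exist by the same fixed-point scheme) and then passes to the limit with the aid of the bilinear bounds. A secondary difficulty is the joint handling of the two topologies in $X_T$: the $L^\infty(\Omega)$ control at regularity $\sigma_0$ is pathwise and is supplied by energy conservation, whereas the $L^2(\Omega)$ control at regularity $\sigma$ requires BDG after conditioning on the already-bounded $H^{\sigma_0}$ norm; ensuring these two estimates close consistently in the same function space is the heart of the bookkeeping.
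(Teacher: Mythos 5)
Your overall skeleton—truncate the nonlinearities, run a contraction, prove conservation of $\mathcal H$, and bootstrap from the smallness of $\norm{u_0}_{H^{\sigma_0}}$—is the same as the paper's, but two steps as you describe them do not close. First, the truncation $\theta_R(\norm{u(t)}_{H^{\sigma_0}})$ does \emph{not} make $f$ and $g$ globally Lipschitz in the $H^{\sigma}$ topology. The tame product estimate gives
\[
	\norm{u_1^2 - u_2^2}_{H^{\sigma}}
	\leqslant
	C\norm{u_1+u_2}_{H^{\sigma_0}}\norm{u_1-u_2}_{H^{\sigma}}
	+
	C\norm{u_1+u_2}_{H^{\sigma}}\norm{u_1-u_2}_{H^{\sigma_0}}
	,
\]
and the second term carries the full $H^{\sigma}$ norm, which your cut-off (based only on the lower norm) cannot bound; the same problem appears in the term $\abs{\theta_R(\norm{u_1}_{H^{\sigma_0}})-\theta_R(\norm{u_2}_{H^{\sigma_0}})}\,\norm{u_2^2}_{H^{\sigma}}$. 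This is exactly why the paper truncates on $\norm{u}_{H^{\sigma}}$ instead, obtaining a genuinely globally Lipschitz map on $L^2(\Omega;C(0,T;H^{\sigma}))$—at the price that the truncated equation is $\sigma$-dependent, which in turn forces the elaborate globalization (the consistency lemma matching $u_m$ and $u_{m+1}$ up to stopping times, the uniform bound $M$, and the iteration over intervals of length $T_1$, since the truncated flow conserves $\mathcal H$ only up to the stopping time). You could rescue your variant by contracting in the weaker $CH^{\sigma_0}$ metric on a closed ball of $CH^{\sigma}$, but that is a different argument from the one you assert and would need to be carried out.

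Second, the energy identity—which you rightly flag as the principal obstacle—is dispatched with a formal Stratonovich chain rule ("the drift and diffusion pairings vanish pathwise") plus "mollify the data and pass to the limit." The real difficulty is that the noise coefficient $\gamma_j\partial_x(u+Ku^2)$ contains the unbounded operator $\partial_x$, so the infinite-dimensional It\^o formula is not directly applicable at the stated regularity: the second Fr\'echet derivative contributes $\int (K^{-1/2}\partial_x u)^2\,dx \sim \norm{u}_{H^{\sigma_0+1}}^2$, which must cancel against the It\^o--Stratonovich correction $\tfrac12\sum_j\gamma_j^2\partial_x^2 u$, and neither term is individually finite for $u\in H^{\sigma}$ with $\sigma$ close to $1$. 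The paper handles this by regularizing $\partial_x$ with a frequency cut-off $P_\lambda$, cutting off the cubic part of $\mathcal H$ with $\theta_n$ so that the martingale term $\sum_j\gamma_j\int_0^t\!\int u^2\,\partial_x P_\lambda u\,dx\,dW_j$ can be shown to vanish in the limit $\lambda\to\infty$ (this is also where $\sigma\geqslant 1$ is actually used), and then passing $\lambda\to\infty$, $n\to\infty$; the paper moreover explains why the naive mollification-of-data route fails for its $\sigma$-dependent cut-off. Your proposal names the obstacle but does not supply the mechanism that overcomes it, and the mechanism is where most of the content of the paper's proof lies.
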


The conservation of energy \eqref{Hamiltonian} plays a crucial role
in the proof.
So it will be a bit more convenient to
regard the energy norm defined by
\[
	\norm{u}_{ \mathcal H }^2
	=
	\frac 12 \int_{\mathbb R}
	\big( K^{-1/2}u \big)^2
	dx
\]
instead of the spatial $H^{\sigma_0}$-norm.
They are obviously equivalent.

The proof is essentially based on the contraction mapping principle.
We do not exploit much smoothing properties of the group $\mathcal S(t, t_0)$,
as for example is done in \cite{Bouard_Debussche1999}
for analysis of a stochastic nonlinear Schr\"odinger equation.
It is enough to know that the absolute value of its symbol equals one,
and that $S(t)$ is a unitary semigroup.
However, in order to appeal to the fixed point theorem
we have to truncate both deterministic $f$ and random $g$ nonlinearities.
There are a couple of technical difficulties related to
implementation of the energy conservation in our case.
Firstly, for the truncated equation we can claim $\mathcal H$-conservation
only until a particular stopping time.
Secondly, one can control $\norm{u}_{ \mathcal H }$ with $\mathcal H(u)$
only provided $\norm{u}_{ \mathcal H }$ is small.
These additional difficulties
make us repeat the arguments of the last section in the paper iteratively
in order to construct solution on the whole  time interval $[0, T_0]$.

As a final remark we point out that the noise in Equation \eqref{Stratonovich_BBM}
can be gathered in one dimensional
\(
	\partial_x
	\left(
		u + K u^2
	\right)
	\circ d B
\)
with the scalar Brownian motion
\(
	B = \sum_j \gamma_j W_j
	.
\)
However, this does not affect the proof below anyhow,
so we continue to stick to the original formulation \eqref{Stratonovich_BBM}.
In future works we are planning to extend it to $\gamma_j$
being either Fourier multipliers or space-dependent coefficients.

\section{Truncation}
\setcounter{equation}{0}

The Sobolev space $H^{\sigma}(\mathbb R)$
consists of tempered distributions $u$
having the finite square norm
\(
	\norm{u}_{H^{\sigma}}^2
	=
	\int \left| \widehat u (\xi) \right| ^2
	\left( 1 + \xi^2 \right) ^{\sigma} d\xi
	<
	\infty
	.
\)
Let
\(
	\theta \in C_0^{\infty}(\mathbb R)
\)
with
\(
	\supp \theta \in [-2, 2]
\)
being such that
\(
	\theta(x) = 1
\)
for
\(
	x \in [-1, 1]
\)
and
\(
	0 \leqslant \theta(x) \leqslant 1
\)
for
\(
	x \in \mathbb R
	.
\)
For $R > 0$ we introduce the cut off
$\theta_R(x) = \theta(x/R)$ and
\[
	f_R(u)
	=
	\theta_R( \norm{u}_{H^{\sigma}} ) f(u)
	, \quad
	g_R(u)
	=
	\theta_R( \norm{u}_{H^{\sigma}} ) g(u)
\]
that we substitute in \eqref{general_Duhamel}
instead of $f(u)$, $g(u)$, respectively.
The new
$R$-regularisation of \eqref{general_Duhamel}
reads as
\begin{equation}
\label{R_Duhamel}
	u(t)
	=
	\mathcal S(t, t_0)
	\left(
		u(t_0)
		+
		\int_{t_0}^t \mathcal S(t_0, s) f_R(u(s)) ds
		+
		\sum_j \gamma_j
		\int_{t_0}^t \mathcal S(t_0, s) g_R(u(s)) dW_j(s)
	\right)
	.
\end{equation}
In this section without loss of generality
we can set $t_0 = 0$ and $u(t_0) = u_0$.
We will vary time moments $t_0$ below in the next section.
Equation \eqref{R_Duhamel} can be solved with a help
of the contraction mapping principle
in
\(
	L^2( \Omega; C(0, T; H^{\sigma}(\mathbb R)) )
	.
\)

\begin{proposition}
\label{R_existence_proposition}
	Let $\sigma > 1/2$,
	\(
		u_0 \in L^2( \Omega; H^{\sigma}(\mathbb R) )
	\)
	be $\mathcal F_0$-measurable and $T_0 > 0$.
	Then \eqref{R_Duhamel} has a unique adapted solution
	\(
		u \in L^2( \Omega; C(0, T_0; H^{\sigma}(\mathbb R)) )
		.
	\)
	Moreover, it depends continuously on the initial data $u_0$.
\end{proposition}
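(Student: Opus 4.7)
The plan is to apply Banach's fixed point theorem to the map $\Phi$ defined by the right-hand side of \eqref{R_Duhamel} (with $t_0 = 0$), on the Banach space $X_T = L^2(\Omega; C(0, T; H^{\sigma}(\mathbb R)))$ of adapted processes, for $T \leqslant T_0$ to be chosen. Four structural ingredients drive the argument: first, since $\sigma > 1/2$, the space $H^{\sigma}(\mathbb R)$ is a Banach algebra; second, the symbol $i\xi K(\xi)$ is bounded (it decays like $\angles{\xi}^{1-2\sigma_0}$ with $1-2\sigma_0 < 0$), so $\partial_x K$ is bounded on every $H^{\sigma}$; third, both $S(t-s)$ and $S_W(t, s)$ are Fourier multipliers with symbols of modulus one, hence $\mathcal S(t, s)$ is a unitary isometry on $H^{\sigma}$ almost surely; fourth, Burkholder-Davis-Gundy is available for the $Q$-Wiener stochastic integral described in the introduction.

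From the algebra property and boundedness of $\partial_x K$ I would first derive the nonlinear bounds
\[
    \norm{g(u) - g(v)}_{H^{\sigma}}
    \lesssim
    \bigl( \norm{u}_{H^{\sigma}} + \norm{v}_{H^{\sigma}} \bigr) \norm{u-v}_{H^{\sigma}},
\]
and analogously $\norm{f(u) - f(v)}_{H^{\sigma}} \lesssim (1 + \sum_j \gamma_j^2)(\norm{u}_{H^{\sigma}} + \norm{v}_{H^{\sigma}})^2 \norm{u-v}_{H^{\sigma}}$, together with the pointwise bounds $\norm{g(u)}_{H^{\sigma}} \lesssim \norm{u}_{H^{\sigma}}^2$ and $\norm{f(u)}_{H^{\sigma}} \lesssim (1+\sum_j \gamma_j^2)\norm{u}_{H^{\sigma}}^3$. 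Combining these with the standard Lipschitz estimate for $\theta_R$ and the fact that $\theta_R(\norm{u}_{H^{\sigma}})$ vanishes outside $\norm{u}_{H^{\sigma}} \leqslant 2R$, I obtain the globally Lipschitz truncated nonlinearities
\[
    \norm{f_R(u) - f_R(v)}_{H^{\sigma}} \leqslant C_1(R) \norm{u-v}_{H^{\sigma}},
    \qquad
    \norm{g_R(u) - g_R(v)}_{H^{\sigma}} \leqslant C_2(R) \norm{u-v}_{H^{\sigma}},
\]
valid on all of $H^{\sigma}$, with uniform a priori bounds $\norm{f_R(u)}_{H^{\sigma}} + \norm{g_R(u)}_{H^{\sigma}} \leqslant C(R)$.

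With this in hand I estimate $\Phi(u) - \Phi(v)$ in $X_T$. The deterministic contribution is handled by Minkowski together with the unitarity of $\mathcal S$, giving a bound $T\, C_1(R) \norm{u-v}_{X_T}$. For the stochastic contribution I invoke BDG in the $Q$-Wiener formulation described in the introduction (the integrand being the Hilbert-Schmidt operator $e_j \mapsto \mathcal S(0,s) g_R(u(s))$, weighted by $\gamma_j$), which gives
\[
    \mathbb E \sup_{t \leqslant T}
    \Bignorm{ \sum_j \gamma_j \int_0^t \mathcal S(0,s)\bigl[g_R(u) - g_R(v)\bigr] dW_j(s) }_{H^{\sigma}}^2
    \leqslant
    C \Bigl( \sum_j \gamma_j^2 \Bigr) C_2(R)^2\, T\, \norm{u-v}_{X_T}^2 .
\]
Altogether $\norm{\Phi(u) - \Phi(v)}_{X_T} \leqslant C(R)( T + \sqrt{T} ) \norm{u-v}_{X_T}$, so choosing $T_1 > 0$ depending only on $R$ and $\sum_j \gamma_j^2$ turns $\Phi$ into a strict contraction on $X_{T_1}$. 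An analogous estimate applied to $\Phi(u)$ itself, using the initial-datum bound from $u_0 \in L^2(\Omega; H^{\sigma})$, shows that $\Phi$ maps the ball of radius $2\norm{u_0}_{L^2(\Omega;H^{\sigma})} + 1$ into itself, so the fixed point exists in $X_{T_1}$. Adaptedness and $\mathcal F_t$-measurability of $\Phi(u)$ are inherited from those of $u$ in the standard way.

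Because $T_1$ depends only on $R$ (and on $\sum_j \gamma_j^2$), not on the initial datum, I would iterate: the end value $u(T_1) \in L^2(\Omega; H^{\sigma})$ is $\mathcal F_{T_1}$-measurable, so the same fixed-point argument run on $[T_1, 2T_1]$ extends the solution, and after finitely many steps $N = \lceil T_0 / T_1 \rceil$ I cover $[0, T_0]$. Continuous dependence on $u_0$ is automatic from the Lipschitz estimates and the Banach contraction at each step. The main technical nuisance I expect is the rigorous setup of the stochastic integral: checking that $s \mapsto \mathcal S(0, s) g_R(u(s))$ is a predictable $L_2(Q^{1/2} H, H^{\sigma})$-valued process and that BDG applies with a constant independent of the truncation parameter; once that is laid out, the rest is a routine Picard iteration.
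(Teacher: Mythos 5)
Your proposal is correct and follows essentially the same route as the paper: a Banach fixed-point argument in $X_T = L^2(\Omega; C(0,T;H^{\sigma}))$, with global Lipschitz bounds for $f_R, g_R$ from the algebra property of $H^\sigma$, boundedness of $\partial_x K$, and the Lipschitz/support properties of $\theta_R$; unitarity of $\mathcal S$ plus Burkholder for the stochastic convolution giving the $C(R)(T+\sqrt{T})$ contraction factor; and iteration on intervals of length depending only on $R$ to reach $[0,T_0]$. The ball-invariance step is superfluous (the nonlinearities are globally Lipschitz, so contraction on all of $X_T$ suffices), but this does not change the argument.
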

\begin{proof}
We set
\(
	\mathcal Tu(t) = \mbox{RHS\eqref{R_Duhamel}}
	.
\)
We will show that $\mathcal T$ is a contraction mapping in
\(
	X_T = L^2( \Omega; C(0, T; H^{\sigma}(\mathbb R)) )
	,
\)
provided $T > 0$ is sufficiently small, depending only on $R$.
Let $u_1, u_2$ be two adapted processes in $X_T$.
Firstly, one can notice that
\[
	\norm
	{
		f_R( u_1 ) - f_R( u_2 )
	}
	_{H^{\sigma}}
	\leqslant
	C \left( 1 + R \right)^2
	\norm
	{
		u_1 - u_2
	}
	_{H^{\sigma}}
	,
\]
\[
	\norm
	{
		g_R( u_1 ) - g_R( u_2 )
	}
	_{H^{\sigma}}
	\leqslant
	C R
	\norm
	{
		u_1 - u_2
	}
	_{H^{\sigma}}
	.
\]
Indeed, $H^{\sigma}(\mathbb R)$ poses an algebraic property
for $\sigma > 1/2$
and $\partial_x K$ is bounded in $H^{\sigma}(\mathbb R)$.
Then assuming
\(
	\norm{u_1}_{H^{\sigma}} \geqslant \norm{u_2}_{H^{\sigma}}
\)
without loss of generality one deduces
\begin{multline*}
	\norm
	{
		g_R( u_1 ) - g_R( u_2 )
	}
	_{H^{\sigma}}
	\leqslant
	C
	\norm
	{
		\theta_R( \norm{u_1}_{H^{\sigma}} ) u_1^2
		-
		\theta_R( \norm{u_2}_{H^{\sigma}} ) u_2^2
	}
	_{H^{\sigma}}
	\\
	\leqslant
	C \theta_R( \norm{u_1}_{H^{\sigma}} )
	\norm
	{
		u_1^2 - u_2^2
	}
	_{H^{\sigma}}
	+
	\left|
		\theta_R( \norm{u_1}_{H^{\sigma}} )
		-
		\theta_R( \norm{u_2}_{H^{\sigma}} )
	\right|
	\norm
	{
		u_2^2
	}
	_{H^{\sigma}}
	\leqslant
	C R
	\norm
	{
		u_1 - u_2
	}
	_{H^{\sigma}}
	,
\end{multline*}
where we have used the estimate
\(
	\left|
		\theta_R( \norm{u_1}_{H^{\sigma}} )
		-
		\theta_R( \norm{u_2}_{H^{\sigma}} )
	\right|
	\leqslant
	\norm{\theta'}_{L^{\infty}} R^{-1}
	\norm{u_1 - u_2}_{H^{\sigma}}
\)
following obviously from the mean value theorem.
The difference between $f_R( u_1 )$ and $f_R( u_2 )$
can be obtained in the same way.
Thus
\begin{multline*}
	\norm
	{
		\mathcal Tu_1(t) - \mathcal Tu_2(t)
	}
	_{H^{\sigma}}
	\leqslant
	\norm
	{
		\int_0^t \mathcal S(0, s) ( f_R( u_1(s) ) - f_R( u_2(s) ) ) ds
	}
	_{H^{\sigma}}
	\\
	+
	\norm
	{
		\sum_j \gamma_j
		\int_0^t \mathcal S(0, s) ( g_R( u_1(s) ) - g_R( u_2(s) ) ) dW_j(s)
	}
	_{H^{\sigma}}
	= I + II
	.
\end{multline*}
The first integral is estimated straightforwardly as
\[
	I
	\leqslant
	\int_0^T
	\norm
	{
		f_R( u_1(s) ) - f_R( u_2(s) )
	}
	_{H^{\sigma}}
	ds
	\leqslant
	C ( 1 + R )^2 T
	\norm
	{
		u_1 - u_2
	}
	_{C( 0, T; H^{\sigma} )}
	.
\]
The second one is estimated with the use of the Burkholder inequality
\cite{Gawarecki_Mandrekar} as
\[
	\mathbb E \sup_{ 0 \leqslant t \leqslant T } II^2
	\leqslant
	C \mathbb E \int_0^T
	\norm
	{
		g_R( u_1(s) ) - g_R( u_2(s) )
	}
	_{H^{\sigma}}^2
	ds
	\leqslant
	C R^2 T \mathbb E
	\norm
	{
		u_1 - u_2
	}
	_{C( 0, T; H^{\sigma} )}^2
	.
\]
It is clear that time-continuity of
\(
	\mathcal Tu_1
	,
	\mathcal Tu_2
\)
follows from the factorisation
\(
	\mathcal S = S S_W
\)
and the estimate
\(
	\norm
	{
		S_W g_R( u )
	}
	_{H^{\sigma}}
	\leqslant
	CR^2
	,
\)
so we have a stochastic convolution as in
\cite[Lemma 3.3]{Gawarecki_Mandrekar}.
Thus
\[
	\norm
	{
		\mathcal Tu_1 - \mathcal Tu_2
	}
	_{X_T}
	\leqslant
	C \left( ( 1 + R )^2 T + R \sqrt{T} \right)
	\norm
	{
		u_1 - u_2
	}
	_{X_T}
	,
\]
and so there exists a small $T$ depending only on $R$
such that $\mathcal T$ has a unique fixed point in $X_T$.
Moreover, this estimate also gives us continuous dependence
of solution in $X_T$ on the initial data
\(
	u_0 \in L^2( \Omega; H^{\sigma}(\mathbb R) )
	,
\)
obviously.
Clearly, the solution can be extended
to the whole interval $[0, T_0]$.
\end{proof}

The regularisation affects the energy conservation.
Indeed, in the It\^o differential form Equation \eqref{R_Duhamel}
reads
\begin{equation}
\label{R_Ito_BBM}
	d u
	=
	\left(
		- \partial_x K u
		+ \frac 12 \sum_j \gamma_j^2 \partial_x^2 u
		+ f_R(u)
		+ \sum_j \gamma_j^2 \partial_x g_R(u)
	\right)
	d t
	+ \sum_j \gamma_j
	\left(
		\partial_x u + g_R(u)
	\right)
	d W_j
	,
\end{equation} 
and so applying the It\^o formula to the energy functional
$\mathcal H(u(t))$ defined by \eqref{Hamiltonian}
with the use of \eqref{R_Ito_BBM},
one can easily obtain
\begin{equation}
\label{R_conservation}
	d \mathcal H(u)
	=
	\left(
	\left(
		\theta_R - 1
	\right)
	\int u^2 \partial_x K u dx
	+
	\theta_R
	\left(
		\theta_R - 1
	\right)
	\sum_j \gamma_j^2
	\int
	\left(
		\frac 12 g(u) K^{-1} g(u)
		+
		u g^2(u)
	\right)
	dx
	\right)
	dt
	.
\end{equation}
Indeed, assuming $\sigma \geqslant \sigma_0 + 2$ at first,
we notice that the solution $u$ given by Proposition \ref{R_existence_proposition}
solves Equation \eqref{R_Ito_BBM}.
Let us introduce the following notations
\[
	\Psi(t)dt + \Phi(t)dW
	=
	\Psi(t)dt
	+ \sum_j \gamma_j
	\Phi(t) e_j d W_j
	=
	\mbox{RHS\eqref{R_Ito_BBM}}
	.
\]
Then It\^o's formula reads
\begin{multline*}
	\mathcal H(u(t))
	=
	\mathcal H(u_0)
	\\
	+
	\int_0^t \partial_u \mathcal H(u(s)) \Psi(s) ds
	+
	\int_0^t \partial_u \mathcal H(u(s)) \Phi(s) dW(s)
	+
	\frac 12 \int_0^t \trace \partial_u^2 \mathcal H(u(s)) ( \Phi(s), \Phi(s) ) ds
	,
\end{multline*}
where the Fr\'echet derivatives are defined by
\[
	\partial_u \mathcal H(u) \phi
	=
	\int_{\mathbb R}
	\left(
		K^{-1/2}u K^{-1/2}\phi + u^2 \phi
	\right)
	dx
	,
\]
\[
	\partial_u^2 \mathcal H(u) ( \phi, \psi)
	=
	\int_{\mathbb R}
	\left(
		K^{-1/2}\phi K^{-1/2}\psi + 2u \phi \psi
	\right)
	dx
\]
at every
\(
	\phi, \psi \in H^{\sigma_0}(\mathbb R)
	.
\)
Substituting these expressions together with
the definitions of $\Phi$ and $\Psi$ into the It\^o's formula
one obtains \eqref{R_conservation}.
Let us, for example, calculate the stochastic integral
\[
	\int_0^t \partial_u \mathcal H(u(s)) \Phi(s) dW(s)
	=
	\sum_j \gamma_j
	\int_0^t \int_{\mathbb R}
	\left(
		K^{-1/2}u K^{-1/2} + u^2
	\right)
	\left(
		\partial_x u
		+
		\theta_R( \norm{u}_{H^{\sigma}} ) \partial_x K u^2
	\right)
	dx
	d W_j
\]
that equals zero as one can see integrating by parts
in the space integral.
Similarly, one calculates the other two integrals
in the It\^o formula.
Thus we have proved \eqref{R_conservation} for
$\sigma \geqslant \sigma_0 + 2$.
In order to lower the bound for $\sigma$,
one would like to argue here by approximation of
initial value $u_0$ via smooth functions
and appeal to the continuous dependence on $u_0$, however,
there is a problem here,
since $\theta_R$ in \eqref{R_conservation}
contains the dependence on $\sigma$.
So even for a smooth initial data the corresponding solution
lies a priori only in $H^{\sigma}$.
This difficulty is overcome in the next statement,
where we argue similar to \cite{Bouard_Debussche2003}.

\begin{proposition}
\label{R_conservation_proposition}
	Let $\sigma_0 > 1/2$ and $\sigma \geqslant \max \{ \sigma_0, 1 \}$.
	Then \eqref{R_conservation} holds almost surely
	for $u$ satisfying Equation \eqref{R_Duhamel}
	given by Proposition \ref{R_existence_proposition}.
\end{proposition}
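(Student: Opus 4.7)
The plan is to regularise by a Fourier-multiplier mollifier, apply It\^o's formula rigorously to the regularised energy, and pass to the limit. Fix $\chi \in C_0^\infty(\mathbb R)$ with $\chi \equiv 1$ near the origin, and define $J_\varepsilon$ by $\widehat{J_\varepsilon v}(\xi) = \chi(\varepsilon\xi) \widehat v(\xi)$. As a Fourier multiplier, $J_\varepsilon$ commutes with $\partial_x$, $K$, and $K^{-1/2}$; it is uniformly bounded on every $H^s(\mathbb R)$ with $J_\varepsilon \to \id$ strongly there; and for any $v \in H^\sigma(\mathbb R)$ the function $J_\varepsilon v$ is spatially $C^\infty$, since its Fourier support lies in $\{ |\xi| \lesssim 1/\varepsilon \}$.

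Given $u$ as provided by Proposition \ref{R_existence_proposition}, set $u^\varepsilon = J_\varepsilon u$. Applying $J_\varepsilon$ to \eqref{R_Ito_BBM} and using commutativity with the linear operators yields a closed It\^o equation for $u^\varepsilon$ in which the linear terms involve $u^\varepsilon$ alone while the nonlinear pieces appear as $J_\varepsilon f_R(u)$ and $J_\varepsilon g_R(u)$. Because $u^\varepsilon$ is pathwise $H^N$-valued for every $N$, It\^o's formula for $\mathcal H(u^\varepsilon)$ is rigorous, and regrouping terms in the same way as in the already-treated case $\sigma \geq \sigma_0 + 2$ produces an identity of the form
\[
	d\mathcal H(u^\varepsilon)
	=
	\bigl[\text{RHS of \eqref{R_conservation} with } u^\varepsilon \text{ in place of } u\bigr]\, dt
	+ \mathcal E_1^\varepsilon\, dt
	+ \mathcal E_2^\varepsilon\, dW ,
\]
where $\mathcal E_1^\varepsilon$ collects differences of the form $f_R(u^\varepsilon) - J_\varepsilon f_R(u)$, $g_R(u^\varepsilon) - J_\varepsilon g_R(u)$, and commutators of $J_\varepsilon$ with multiplication by polynomial expressions in $u$, all paired against the test functions coming from $\partial_u \mathcal H(u^\varepsilon)$; and $\mathcal E_2^\varepsilon$ is the stochastic integrand, whose smooth-case counterpart $\partial_u \mathcal H(u) \Phi$ vanished identically.

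Next I would let $\varepsilon \to 0^+$. Since $u^\varepsilon \to u$ in $C(0,T_0; H^\sigma(\mathbb R))$ almost surely and $\sigma \geq 1 > 1/2$, the algebra property of $H^\sigma$ together with the embedding $H^\sigma \hookrightarrow L^\infty$ gives pathwise uniform convergence of every nonlinear integrand on the right-hand side, as well as $\mathcal H(u^\varepsilon(t)) \to \mathcal H(u(t))$ for each $t$. Dominated convergence then handles the Lebesgue-in-time drift, and $\mathcal E_1^\varepsilon \to 0$ in $L^1(0,T_0)$ by the same arguments combined with the strong convergence $J_\varepsilon \to \id$ on $H^\sigma$. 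For the stochastic piece, Burkholder's inequality reduces matters to showing $\mathbb E \int_0^{T_0} \fixednorm{\mathcal E_2^\varepsilon}_{L^2}^2 \, ds \to 0$.

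The hard part is precisely the analysis of $\mathcal E_2^\varepsilon$: for fixed $\varepsilon > 0$ it does not vanish as in the smooth case, yet it must be shown to vanish in the limit. Applying the same integration-by-parts identities that make $\partial_u \mathcal H(u) \Phi$ formally zero, one rewrites $\mathcal E_2^\varepsilon$ entirely as a sum of commutators of $J_\varepsilon$ with multiplication operators by polynomial expressions in $u$, the commutators with the Fourier-multiplier factors $\partial_x$, $K$, and $K^{-1}$ being zero. Standard mollifier estimates (Friedrichs-type) then give that each such commutator tends to zero in $L^2(\mathbb R)$ as $\varepsilon \to 0$ whenever the multiplying factor lies in $L^\infty \cap L^2$, with uniform majorants expressed as polynomials in $\fixednorm{u}_{H^\sigma}$. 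The condition $\sigma \geq 1$ is used here to pair the It\^o correction $\frac{1}{2}\sum \gamma_j^2 \partial_x^2 u$ against $K^{-1}u$ through integration by parts without leaving derivatives unabsorbed, while $\sigma \geq \sigma_0$ ensures that the bilinear form $\partial_u^2 \mathcal H$ acts boundedly on $H^\sigma$.
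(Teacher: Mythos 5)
Your route is genuinely different from the paper's. The paper does not mollify the solution: it regularises the equation itself, replacing the unbounded noise operator $\partial_x$ by $\partial_x P_\lambda$, solves the resulting equation to obtain a new process $u_\lambda$, proves the nontrivial convergence $u_\lambda \to u$ in $L^2(\Omega\times[0,T_0];H^\sigma)$ (this occupies roughly half of the paper's proof), and applies It\^o's formula not to $\mathcal H$ but to a truncated functional $\mathcal H_n(u)=\Vert u\Vert_{\mathcal H}^2+\frac13\theta_n(\Vert u\Vert_{\mathcal H}^2)\int u^3\,dx$, passing first $\lambda\to\infty$ and then $n\to\infty$. Your scheme --- mollify the solution, use that $J_\varepsilon$ commutes with every Fourier multiplier so that $u^\varepsilon=J_\varepsilon u$ satisfies an exact smooth (non-closed) It\^o equation, apply It\^o to $\mathcal H(u^\varepsilon)$ itself, and let $\varepsilon\to0$ --- buys two genuine simplifications: the convergence $u^\varepsilon\to u$ in $C(0,T_0;H^\sigma)$ is free, and the It\^o correction $\frac12\sum_j\gamma_j^2\partial_x^2u^\varepsilon$ cancels the quadratic variation of the linear noise $\partial_x u^\varepsilon$ \emph{exactly} (both combinations are total derivatives of smooth integrable functions), so the paper's most troublesome term $J_4=\frac{\theta_n}{2}\sum_j\gamma_j^2\int\bigl(u^2\partial_x^2P_\lambda^2u+2u(\partial_xP_\lambda u)^2\bigr)dx$ --- the only place where $\sigma\geqslant1$ is really used there --- simply never appears.

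The one step that does not close as written is the limit of the stochastic integral. You bound $\mathcal E_2^\varepsilon$ by ``uniform majorants expressed as polynomials in $\Vert u\Vert_{H^\sigma}$'' and invoke dominated convergence for $\mathbb E\int_0^{T_0}\vert\mathcal E_2^\varepsilon\vert^2ds$. But the stochastic integrand of the energy is cubic in $u$, so such a majorant gives $\vert\mathcal E_2^\varepsilon\vert^2\lesssim\Vert u\Vert_{H^\sigma}^6$, while Proposition \ref{R_existence_proposition} only provides $u\in L^2(\Omega;C(0,T_0;H^\sigma))$, i.e.\ second moments; there is no integrable dominating function. This is exactly the difficulty the paper circumvents with the extra cut-off $\theta_n$ in $\mathcal H_n$ (and it explicitly remarks that the use of $\mathcal H_n$ instead of $\mathcal H$ is important at that point). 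In your setting the gap is repairable without $\mathcal H_n$, but the reason must be stated: after the exact cancellations $\int K^{-1}u^\varepsilon\,\partial_xu^\varepsilon\,dx=0$ and $\int(u^\varepsilon)^2\partial_xu^\varepsilon\,dx=0$, the only surviving contributions to $\mathcal E_2^\varepsilon$ come from the pairing with $J_\varepsilon g_R(u)$ and therefore carry the factor $\theta_R(\Vert u\Vert_{H^\sigma})$; on the support of that factor $\Vert u\Vert_{H^\sigma}\leqslant2R$, so $\vert\mathcal E_2^\varepsilon\vert\leqslant C(R)$ uniformly in $\varepsilon,t,\omega$ while tending to zero pathwise, and bounded convergence applies (alternatively, localise and use convergence in probability of stochastic integrals). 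Without this observation the Burkholder step is unjustified. A minor further point: attributing the hypothesis $\sigma\geqslant1$ to the pairing of $\partial_x^2u$ against $K^{-1}u$ is off --- in your scheme that pairing occurs at the level of the smooth $u^\varepsilon$ and vanishes identically before any limit is taken.
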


\begin{proof}

The main idea is to cut off high frequencies
of the differential operator $\partial_x$ in \eqref{R_Ito_BBM} as follows.
Let $P_{\lambda}$ be a Fourier multiplier with the symbol $\theta_{\lambda}$,
$\lambda > 0$.
It is defined by the expression
\(
	\mathfrak F ( P_{\lambda} \psi )
	=
	\theta_{\lambda} \widehat \psi
	.
\)
Now we consider instead of \eqref{R_Ito_BBM} the following regularisation
\begin{equation}
\label{lambda_R_Ito_BBM}
	d u
	=
	\left(
		- \partial_x K u
		+ \frac 12 \sum_j \gamma_j^2 \partial_x^2 P_{\lambda}^2 u
		+ f_R(u)
		+ \sum_j \gamma_j^2 \partial_x P_{\lambda} g_R(u)
	\right)
	d t
	+ \sum_j \gamma_j
	\left(
		\partial_x P_{\lambda} u + g_R(u)
	\right)
	d W_j
\end{equation} 
that has a strong solution.
Indeed, it contains only bounded operators and
the corresponding mild equation has exactly the same form
as Equation \eqref{R_Duhamel} with
\(
	\mathcal S^{\lambda} = S S_W^{\lambda}
\)
now instead of $\mathcal S$, where
\[
	S_W^{\lambda}
	=
	\exp
	\left[
		\sum_j \gamma_j \partial_x P_{\lambda} ( W_j(t) - W_j(t_0) )
	\right]
	.
\]
So we can actually apply Proposition \ref{R_existence_proposition}
to obtain $u = u_{\lambda}$ solving \eqref{lambda_R_Ito_BBM}.
Let $u = u_{\infty}$ stay for the solution of
the original equation \eqref{R_Duhamel}.
Firstly, we will check that
\(
	u_{\lambda} \to u_{\infty}
\)
in
\(
	L^2( \Omega; L^2(0, T_0; H^{\sigma}(\mathbb R)) )
\)
for any $\sigma > 1/2$ as $\lambda \to \infty$.

Let
\(
	0 \leqslant t \leqslant T \leqslant T_0
	,
\)
where a positive small enough time moment $T$
is to be chosen below.
Then
\begin{multline*}
	\norm
	{
		u_{\lambda}(t)
		-
		u_{\infty}(t)
	}
	_{H^{\sigma}}
	=
	\norm
	{
		\mathcal T^{\lambda}u_{\lambda}(t)
		-
		\mathcal T^{\infty}u_{\infty}(t)
	}
	_{H^{\sigma}}
	\leqslant
	\norm
	{
		\left(
			\mathcal S^{\lambda}(t, 0)
			-
			\mathcal S^{\infty}(t, 0)
		\right)
		u_0
	}
	_{H^{\sigma}}
	\\
	+
	\norm
	{
		\int_0^t
		\left(
			\mathcal S^{\lambda}(t, s)
			-
			\mathcal S^{\infty}(t, s)
		\right)
		f_R( u_{\infty}(s) )  ds
	}
	_{H^{\sigma}}
	+
	\norm
	{
		\int_0^t \mathcal S^{\lambda}(t, s)
		( f_R( u_{\lambda}(s) ) - f_R( u_{\infty}(s) ) ) ds
	}
	_{H^{\sigma}}
	\\
	+
	\norm
	{
		\left(
			\mathcal S^{\lambda}(t, 0)
			-
			\mathcal S^{\infty}(t, 0)
		\right)
		\sum_j \gamma_j
		\int_0^t \mathcal S^{\infty}(0, s) g_R( u_{\infty}(s) ) dW_j(s)
	}
	_{H^{\sigma}}
	\\
	+
	\norm
	{
		\sum_j \gamma_j \int_0^t
		\left(
			\mathcal S^{\lambda}(0, s)
			-
			\mathcal S^{\infty}(0, s)
		\right)
		g_R( u_{\infty}(s) ) dW_j(s)
	}
	_{H^{\sigma}}
	\\
	+
	\norm
	{
		\sum_j \gamma_j
		\int_0^t \mathcal S^{\lambda}(0, s)
		( g_R( u_{\lambda}(s) ) - g_R( u_{\infty}(s) ) ) dW_j(s)
	}
	_{H^{\sigma}}
	= I_1 + \ldots + I_6
	.
\end{multline*}
The terms $I_3$ and $I_6$ are estimated exactly as
the analogous integrals $I$ and $II$
in the proof of Proposition \ref{R_existence_proposition},
namely,
\[
	I_3
	\leqslant
	C ( 1 + R )^2 \sqrt T
	\norm
	{
		u_{\lambda} - u_{\infty}
	}
	_{L^2( 0, T; H^{\sigma} )}
\]
and
\[
	\mathbb E \sup_{ 0 \leqslant t \leqslant T } I_6^2
	\leqslant
	C \mathbb E \int_0^T
	\norm
	{
		g_R( u_{\lambda}(s) ) - g_R( u_{\infty}(s) )
	}
	_{H^{\sigma}}^2
	ds
	\leqslant
	C R^2 \mathbb E
	\norm
	{
		u_{\lambda} - u_{\infty}
	}
	_{L^2( 0, T; H^{\sigma} )}^2
	.
\]
Thus
\[
	\mathbb E \int_0^T
	\left(
		I_3^2 + I_6^2
	\right)
	dt
	\leqslant
	C \left( ( 1 + R )^4 T^2 + R^2 T \right)
	\mathbb E
	\norm
	{
		u_{\lambda} - u_{\infty}
	}
	_{L^2( 0, T; H^{\sigma} )}^2
	,
\]
and so there exists a small $T > 0$ depending only on $R$
such that
\[
	\mathbb E
	\norm
	{
		u_{\lambda} - u_{\infty}
	}
	_{L^2( 0, T; H^{\sigma} )}^2
	\leqslant
	C \mathbb E \int_0^T
	\left(
		I_1^2 + I_2^2 + I_4^2 + I_5^2
	\right)
	dt
	.
\]
One needs to show that the right hand side of this expression
tends to zero when $\lambda \to \infty$.
All these four integrals are treated similarly.
Indeed, let us regard more closely the first one
\[
	I_1^2
	=
	\int
	\left|
		\exp
		\left(
			i\xi \theta_{\lambda}(\xi) \sum_j \gamma_j W_j(t)
		\right)
		-
		\exp
		\left(
			i\xi \sum_j \gamma_j W_j(t)
		\right)
	\right| ^2
	\left|
		\widehat{u_0} (\xi)
	\right| ^2
	\left( 1 + \xi^2 \right)^{\sigma} d\xi
\]
that obviously tends to zero as $\lambda \to \infty$
for a.e. $\omega$ and any $t$.
Hence
\(
	\mathbb E \int_0^T I_1^2 dt \to 0
\)
by the dominated convergence theorem, sine
\(
	I_1 \leqslant 2 \norm{u_0}_{H^{\sigma}}
	.
\)
The integral of $I_4^2$ is estimated exactly in the same manner
with the stochastic integral of
\(
	\mathcal S^{\infty} g_R( u_{\infty} )
\)
standing in place of $u_0$.
The second integral
\[
	\mathbb E \int_0^T I_2^2 dt
	\leqslant
	T \mathbb E \int_0^T \int_0^T
	\norm
	{
		\left(
			\mathcal S^{\lambda}(t, s)
			-
			\mathcal S^{\infty}(t, s)
		\right)
		f_R( u_{\infty}(s) )
	}
	_{H^{\sigma}}^2
	ds dt
	\to 0
\]
by the dominated convergence theorem,
since
\(
	\norm
	{
		\ldots
	}
	_{H^{\sigma}}^2
	\leqslant CR^2 (1 + R)^4
	.
\)
Finally, the last integral
\[
	\mathbb E \int_0^T I_5^2 dt
	\leqslant
	T \mathbb E \sup _{ t \in [0, T] } I_5^2
	\leqslant
	C T \mathbb E \int_0^T
	\norm
	{
		\left(
			\mathcal S^{\lambda}(0, s)
			-
			\mathcal S^{\infty}(0, s)
		\right)
		g_R( u_{\infty}(s) )
	}
	_{H^{\sigma}}^2
	ds
	\to 0
\]
by the Burkholder inequality and the dominated convergence theorem,
since
\(
	\norm
	{
		\ldots
	}
	_{H^{\sigma}}^2
	\leqslant CR^4
	.
\)

Repeating this argument iteratively
on subintervals of $[0, T_0]$ of the size $T$ one obtains that
\(
	u_{\lambda} \to u_{\infty}
\)
in
\(
	L^2( \Omega \times [0, T_0] ; H^{\sigma}(\mathbb R) )
	.
\)

Let us calculate each term in the It\^o formula for $u = u_{\lambda}$.
As we shall see the corresponding stochastic integral is not zero,
and moreover, it is difficult to pass to the limit $\lambda \to \infty$
treating the stochastic part.
So instead of $\mathcal H$ we consider at first a sequence
$\mathcal H_n$, $n \in \mathbb N$, with the cubic term being cut off
in the following way
\[
	\mathcal H_n(u)
	=
	\norm u_{\mathcal H}^2
	+
	\frac 13
	\theta_n
	\left(
		\norm u_{\mathcal H}^2
	\right)
	\int u^3 dx
\]
that clearly tends to $\mathcal H(u)$ almost surely at any
fixed time moment.
The corresponding
Fr\'echet derivatives are defined by
\[
	\partial_u \mathcal H_n(u) \phi
	=
	\int_{\mathbb R}
	\left[
		\left(
			1
			+
			\frac 13
			\theta_n'
			\left(
				\norm u_{\mathcal H}^2
			\right)
			\int u^3 dy
		\right)
		K^{-1/2}u K^{-1/2}\phi
		+
		\theta_n
		\left(
			\norm u_{\mathcal H}^2
		\right)
		u^2 \phi
	\right]
	dx
	,
\]
\begin{multline*}
	\partial_u^2 \mathcal H_n(u) ( \phi, \psi)
	=
	\int_{\mathbb R}
	\left[
		\left(
			1
			+
			\frac 13
			\theta_n'
			\left(
				\norm u_{\mathcal H}^2
			\right)
			\int u^3 dx
		\right)
		K^{-1/2} \phi K^{-1/2}\psi
		+
		2 \theta_n
		\left(
			\norm u_{\mathcal H}^2
		\right)
		u \phi \psi
	\right]
	dx
	\\
	+
	\theta_n'
	\left(
		\norm u_{\mathcal H}^2
	\right)
	\int u^2 \phi dx
	\int K^{-1/2} u K^{-1/2}\psi dy
	\\
	+
	\frac 13
	\theta_n''
	\left(
		\norm u_{\mathcal H}^2
	\right)
	\int u^3 dx
	\int K^{-1/2} u K^{-1/2}\phi dy
	\int K^{-1/2} u K^{-1/2}\psi dz
\end{multline*}
at every
\(
	\phi, \psi \in H^{\sigma_0}(\mathbb R)
	.
\)
Substituting it to the stochastic integral
one obtains the following expression
that can be simplified by integration by parts
\begin{multline*}
	\int_0^t \partial_u \mathcal H_n(u(s)) \Phi(s) dW(s)
	\\
	=
	\sum_j \gamma_j
	\int_0^t \int_{\mathbb R}
	\left[
		\left(
			1
			+
			\frac 13
			\theta_n'
			\left(
				\norm u_{\mathcal H}^2
			\right)
			\int u^3 dy
		\right)
		K^{-1/2}u K^{-1/2}
		+
		\theta_n
		\left(
			\norm u_{\mathcal H}^2
		\right)
		u^2
	\right]
	\\
	\left(
		\partial_x P_{\lambda} u
		+
		\theta_R( \norm{u}_{H^{\sigma}} ) \partial_x K u^2
	\right)
	dx
	d W_j
	=
	\sum_j \gamma_j
	\int_0^t
	\theta_n
	\left(
		\norm u_{\mathcal H}^2
	\right)
	\int_{\mathbb R}
	u^2
	\partial_x P_{\lambda} u
	dx
	d W_j
	,
\end{multline*}
where $u = u_{\lambda}$.
We will show that this integral tends to zero as $\lambda \to \infty$.
That is exactly the place where we need the cut off $\theta_n$.
Applying some algebraic manipulations to the space integral
and the Burkholder inequality to the stochastic integral,
one deduces the estimate
\begin{multline*}
	\mathbb E \sup _{0 \leqslant t \leqslant T_0}
	\left|
		\int_0^t \partial_u \mathcal H_n(u(s)) \Phi(s) dW(s)
	\right| ^2
	\leqslant
	C \mathbb E \int_0^{T_0}
	\theta_n^2
	\left(
		\norm { u_{\lambda}(t) }_{\mathcal H}^2
	\right)
	\left(
		\int_{\mathbb R}
		u_{\lambda}^2(t)
		\partial_x ( P_{\lambda} - 1 ) u_{\lambda}(t)
		dx
	\right) ^2
	dt
	\\
	\leqslant
	C \mathbb E \int_0^{T_0}
	\theta_n^2
	\left(
		\norm { u_{\lambda}(t) }_{\mathcal H}^2
	\right)
	\norm { u_{\lambda}(t) }_{\mathcal H}^4
	\left(
		\norm{
			( P_{\lambda} - 1 ) u_{\infty}(t)
		} _{H^{1/2}}^2
		+
		\norm{
			( P_{\lambda} - 1 ) ( u_{\lambda}(t) - u_{\infty}(t) )
		} _{H^{1/2}}^2
	\right)
	dt
	\\
	\leqslant
	C n^4 \mathbb E \int_0^{T_0}
	\left(
		\norm{
			( P_{\lambda} - 1 ) u_{\infty}(t)
		} _{H^{1/2}}^2
		+
		\norm{
			( u_{\lambda}(t) - u_{\infty}(t) )
		} _{H^{1/2}}^2
	\right)
	dt
	\to 0
\end{multline*}
as $\lambda \to 0$ for each fixed $n \in \mathbb N$.
Note that the use of the functional $\mathcal H_n$
instead of $\mathcal H$ is important here.
Similarly, we calculate the rest two terms in the It\^o formula
\begin{multline*}
	\partial_u \mathcal H_n(u) \Phi
	+
	\frac 12 \trace \partial_u^2 \mathcal H(u) ( \Phi, \Phi )
	=
	\left(
		\theta_R - \theta_n
	\right)
	\int u^2 \partial_x K u dx
	+
	\theta_n \theta_R
	\left(
		\theta_R - 1
	\right)
	\sum_j \gamma_j^2
	\int u g^2(u) dx
	\\
	+
	\frac { \theta_R (\theta_R - 1) }2
	\sum_j \gamma_j^2
	\int g(u) K^{-1} g(u) dx
	+
	\frac {\theta_n}2
	\sum_j \gamma_j^2 \int
	\left(
		u^2 \partial_x^2 P_{\lambda}^2 u
		+
		2u ( \partial_x P_{\lambda} u )^2
	\right)
	dx
	\\
	+
	\theta_n \theta_R
	\sum_j \gamma_j^2
	\left(
		2 \int u( \partial_x P_{\lambda} u ) g(u) dx
		-
		\int g(u) P_{\lambda} K^{-1} g(u) dx
	\right)
	\\
	+
	\frac 13 \theta_R \theta_n' \int u^3 dy
	\left(
		\frac {\theta_R - 1}2 \sum_j \gamma_j^2
		\int g(u) K^{-1} g(u) dx
		-
		\int u g(u) dx
	\right)
	= J_1 + \ldots + J_6
	,
\end{multline*}
where as above $u = u_{\lambda}$.
One can prove that for a.e. $\omega \in \Omega$ and $t \in [0, T_0]$
the first three terms
\(
	J_1 + J_2 + J_3
\)
tend to the integrand of the right hand side of Expression \eqref{R_conservation}
in the subsequent limits, firstly, as $\lambda \to \infty$ and then as $n \to \infty$. 
Both $J_4$ and $J_5$ tend to zero as $\lambda \to \infty$.
Meanwhile the last term $J_6$ stays bounded by $C/n$,
and so
\(
	\lim _{n \to \infty} \lim _{\lambda \to \infty} J_6 = 0
	.
\)
Let us show, for example, that $J_4 \to 0$
which is the most troublesome term in the sum,
since here is the only place in the paper where
we make use of the fact $\sigma \geqslant 1$.
The rest are treated similarly without this additional restriction.
Indeed,
\[
	J_4
	\leqslant
	C
	\left|
		\int
		\left(
			u \partial_x P_{\lambda} u
			-
			P_{\lambda} ( u \partial_x u )
		\right)
		( P_{\lambda} - 1 ) \partial_x u
		dx
	\right|
	\leqslant
	C
	\norm{ u_{\lambda} } _{H^1}^2
	\left(
		\norm{ ( P_{\lambda} - 1 ) u_{\infty} } _{H^1}
		+
		\norm{ u_{\lambda} - u_{\infty} } _{H^1}
	\right)
\]
that obviously tends to zero as $\lambda \to \infty$.
This concludes the proof.

\end{proof}

At this stage one cannot claim the energy conservation yet,
so we will prove a weaker result that will be sharpened later.
Note that there exists $C_{\mathcal H} > 0$ such that
\begin{equation}
\label{energy_norm_control}
	\norm{ u }_{ \mathcal H }^2
	( 1 - C_{\mathcal H} \norm{ u }_{ \mathcal H } )
	\leqslant
	\mathcal H(u)
	\leqslant
	\norm{ u }_{ \mathcal H }^2
	( 1 + C_{\mathcal H} \norm{ u }_{ \mathcal H } )
	,
\end{equation}
following from the well-known embedding
\(
	H^{\sigma_0} (\mathbb R)
	\hookrightarrow
	L^{\infty} (\mathbb R)
	,
\)
recall that $\sigma_0 > 1/2$.

\begin{lemma}
\label{energy_estimate_lemma}
	There exists a constant $T_1 > 0$ independent of $\omega$
	such that if $u$ solving Equation \eqref{R_Duhamel} has
	\(
		\norm{ u }_{ \mathcal H }
		\leqslant
		\frac 1{ 2 C_{\mathcal H} }
	\)
	on some interval $[0, \tau]$ then
	\(
		\mathcal H(u) \leqslant 2 \mathcal H(u(0))
	\)
	on
	\(
		[ 0, T_1 \wedge \tau]
		.
	\)
\end{lemma}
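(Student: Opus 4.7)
The starting point is the pathwise identity \eqref{R_conservation} established in Proposition \ref{R_conservation_proposition}. Crucially, the stochastic integral in the It\^o expansion of $\mathcal H(u)$ vanishes, so
\[
	\mathcal H(u(t)) - \mathcal H(u(0))
	=
	\int_0^t F(u(s)) \, ds
	\quad\text{a.s.,}
\]
where $F(u) = (\theta_R - 1) \int u^2 \partial_x K u\, dx + \theta_R(\theta_R - 1) \sum_j \gamma_j^2 \int (\tfrac12 g(u) K^{-1} g(u) + u g^2(u))\, dx$. Since $|\theta_R|, |\theta_R - 1| \leq 1$, the factors carrying the cut-off provide only an absolute bound on the coefficient, so the whole task reduces to controlling the three spatial integrals appearing in $F(u)$ purely in terms of the energy norm $\fixednorm{u}_{\mathcal H}$.

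The key observation is that each of these integrals is at least cubic in $u$ and can be bounded by a power of $\fixednorm{u}_{\mathcal H}$ alone, with no dependence on $\fixednorm{u}_{H^{\sigma}}$. The operator $K^{1/2}$ has non-positive order and is therefore bounded on $L^2$, so $\fixednorm{u}_{L^2} \leq C\fixednorm{u}_{\mathcal H}$; the Sobolev embedding $H^{\sigma_0} \hookrightarrow L^{\infty}$ (with $\sigma_0 > 1/2$) yields $\fixednorm{u}_{L^{\infty}} \leq C\fixednorm{u}_{\mathcal H}$; and the symbol of $\partial_x K$ is dominated by $(1+\xi^2)^{(1-2\sigma_0)/2} \leq 1$, giving $\fixednorm{\partial_x K u}_{L^2} \leq C \fixednorm{u}_{\mathcal H}$. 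These three estimates yield $|\int u^2 \partial_x K u \, dx| \leq C\fixednorm{u}_{\mathcal H}^3$. For the terms in $g$, I would estimate $\fixednorm{g(u)}_{L^2} \leq C \fixednorm{u^2}_{L^2} \leq C\fixednorm{u}_{\mathcal H}^2$ and, noting that $K^{-1/2} g(u) = K^{1/2}\partial_x u^2$ has symbol $\lesssim (1+\xi^2)^{(1-\sigma_0)/2}$, bound $\fixednorm{K^{-1/2}g(u)}_{L^2}^2$ by $C\fixednorm{u^2}_{L^2}^2$ when $\sigma_0 \geq 1$, and by $C\fixednorm{u^2}_{H^{1-\sigma_0}}^2 \leq C\fixednorm{u}_{H^{\sigma_0}}^4$ when $1/2 < \sigma_0 < 1$ (using the multiplicative property $\fixednorm{fg}_{H^{s}} \leq C\fixednorm{f}_{H^{\sigma_0}} \fixednorm{g}_{H^{s}}$ for $s \leq \sigma_0$). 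Combining everything, $|F(u)| \leq C(\fixednorm{u}_{\mathcal H}^3 + \fixednorm{u}_{\mathcal H}^4 + \fixednorm{u}_{\mathcal H}^5)$.

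The smallness assumption $\fixednorm{u}_{\mathcal H} \leq 1/(2C_{\mathcal H})$ on $[0, \tau]$ now does two things for us. First, via \eqref{energy_norm_control} it gives $\mathcal H(u) \geq \fixednorm{u}_{\mathcal H}^2/2 \geq 0$, so $\mathcal H(u)$ is non-negative on $[0, \tau]$. Second, it absorbs all the higher powers of $\fixednorm{u}_{\mathcal H}$ into the cubic one: writing $\fixednorm{u}_{\mathcal H}^3 = \fixednorm{u}_{\mathcal H} \cdot \fixednorm{u}_{\mathcal H}^2 \leq (2C_{\mathcal H})^{-1} \cdot 2\mathcal H(u)$, one obtains $|F(u)| \leq C_* \mathcal H(u)$ on $[0, \tau]$ for some universal constant $C_*$. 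Hence, for $t \in [0, \tau]$,
\[
	\mathcal H(u(t)) \leq \mathcal H(u(0)) + C_* \int_0^t \mathcal H(u(s)) \, ds,
\]
and Gronwall's inequality gives $\mathcal H(u(t)) \leq \mathcal H(u(0)) e^{C_* t}$. Choosing $T_1 = (\log 2)/C_*$, which is manifestly independent of $\omega$, yields the desired bound $\mathcal H(u(t)) \leq 2 \mathcal H(u(0))$ on $[0, T_1 \wedge \tau]$.

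The main obstacle I expect is the second step: one must bound every term arising in $F(u)$ by a pure power of $\fixednorm{u}_{\mathcal H}$, rather than allowing $\fixednorm{u}_{H^{\sigma}}$, because the $L^{\infty}H^{\sigma_0}$-smallness (not $L^{\infty}H^{\sigma}$-smallness) is what will be propagated in the global argument to come. This forces the delicate use of the order $-2\sigma_0$ of $K$ together with the borderline algebra/embedding properties of $H^{\sigma_0}$, and is the reason the estimates split into the two cases $\sigma_0 \geq 1$ and $1/2 < \sigma_0 < 1$.
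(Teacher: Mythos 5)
Your proposal is correct and follows essentially the same route as the paper: use the lower bound $\mathcal H(u)\geqslant \tfrac12\fixednorm{u}_{\mathcal H}^2$ furnished by the smallness assumption and \eqref{energy_norm_control}, bound the drift in \eqref{R_conservation} by $C\,\mathcal H(u)$ uniformly in $R$ and $\omega$, and conclude with Gr\"onwall, taking $T_1=(\log 2)/C$. The paper states the intermediate bound $\mathcal H(u(t))\leqslant\mathcal H(u(0))+C\int_0^t\mathcal H(u(s))\,ds$ without detail; your term-by-term estimates of the three spatial integrals (including the case split at $\sigma_0=1$ for the $g(u)K^{-1}g(u)$ term) supply exactly the omitted verification.
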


\begin{proof}

At first one can notice that as long as $\norm{ u }_{ \mathcal H }$
stays bounded by $(2 C_{\mathcal H})^{-1}$,
we have
\[
	\frac 12 \norm{ u }_{ \mathcal H }^2
	\leqslant
	\mathcal H(u)
	\leqslant
	\frac 32 \norm{ u }_{ \mathcal H }^2
	.
\]
Moreover, one can as well easily deduce from \eqref{R_conservation}
the following bound
\[
	\mathcal H(u(t))
	\leqslant
	\mathcal H(u(0)) + C \int_0^t \mathcal H(u(s)) ds
	,
\]
and so  the proof is concluded by Gr\"onwall's lemma.

\end{proof}

\section{Proof of the main result}
\setcounter{equation}{0}

We construct a solution $u$ of \eqref{general_Duhamel}
iteratively on the intervals
\(
	[0, T_1]
	,
	[T_1, 2T_1]
\)
and so on.
Here the interval size $T_1$ is defined by Lemma \ref{energy_estimate_lemma}.
Staying under the assumptions of Theorem \ref{main_theorem},
we denote by $u_m$ solutions of Equation \eqref{R_Duhamel}
with $R = m \in \mathbb N$
given by Proposition \ref{R_existence_proposition},
where we subsequently set $t_0 = 0, T_1, 2T_1, \ldots$.
We define the stopping times
\begin{equation}
\label{stopping_time}
	\tau_m = \tau_m^{t_0} = \inf
	\left \{
		t \in [t_0, T_0]
		:
		\norm{ u_m(t) }_{H^{\sigma}} > m
	\right \}
\end{equation}
with the agreement $\inf \emptyset = T_0$.
Starting with $t_0 = 0$ we firstly show the following result.

\begin{lemma}
\label{consistency_lemma}
	For a.e. $\omega \in \Omega$, any
	\(
		m \in \mathbb N
	\)
	and each $t \in [0, \tau]$ with
	\(
		\tau(\omega) = \min \{ \tau_m(\omega), \tau_{m+1}(\omega) \}
		,
	\)
	it holds true that
	\(
		u_m(t) = u_{m+1}(t)
		.
	\)
\end{lemma}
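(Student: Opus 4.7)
The plan is to exploit the fact that on the interval $[0,\tau]$ the cut-offs $\theta_m$ and $\theta_{m+1}$ both act as the identity, so that both $u_m$ and $u_{m+1}$ satisfy the same untruncated Duhamel equation \eqref{general_Duhamel}. Uniqueness then follows by a contraction/Gr\"onwall-type argument analogous to the one in the proof of Proposition \ref{R_existence_proposition}, but localised to $[0,\tau]$.

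First I would observe that continuity of $s\mapsto\|u_m(s)\|_{H^\sigma}$ together with the definition \eqref{stopping_time} of $\tau_m$ gives $\|u_m(s)\|_{H^\sigma}\leqslant m$ for every $s\in[0,\tau_m]$. Since $\theta_R(x)=1$ for $|x|\leqslant R$ and $m\leqslant m+1$, this yields
\[
	\theta_m(\|u_m(s)\|_{H^\sigma})=\theta_{m+1}(\|u_m(s)\|_{H^\sigma})=1,
\]
whence $f_m(u_m(s))=f(u_m(s))=f_{m+1}(u_m(s))$, and similarly for $g$, on $[0,\tau_m]\supset[0,\tau]$. The analogous identity holds for $u_{m+1}$ on $[0,\tau_{m+1}]\supset[0,\tau]$. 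Hence, pathwise on $[0,\tau]$, both $u_m$ and $u_{m+1}$ satisfy the untruncated Duhamel equation with the common initial datum $u_0$.

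Setting $w=u_m-u_{m+1}$, for $t\leqslant\tau$ this yields
\[
	w(t)=\mathcal S(t,0)\int_0^t\mathcal S(0,s)\bigl(f(u_m)-f(u_{m+1})\bigr)\,ds+\mathcal S(t,0)\sum_j\gamma_j\int_0^t\mathcal S(0,s)\bigl(g(u_m)-g(u_{m+1})\bigr)\,dW_j(s).
\]
On $[0,\tau]$ both $u_m$ and $u_{m+1}$ are bounded in $H^\sigma$ by $m+1$, so the same algebraic manipulations as in Proposition \ref{R_existence_proposition} (algebra property of $H^\sigma$ for $\sigma>1/2$ and boundedness of $\partial_xK$ on $H^\sigma$) give
\[
	\|f(u_m)-f(u_{m+1})\|_{H^\sigma}\leqslant C(1+m)^2\|w\|_{H^\sigma},\qquad\|g(u_m)-g(u_{m+1})\|_{H^\sigma}\leqslant C(1+m)\|w\|_{H^\sigma}.
\]
Using the $H^\sigma$-isometry of $\mathcal S$, the Burkholder inequality applied to the stopped stochastic integrand $\mathbf 1_{[0,\tau]}(s)(g(u_m(s))-g(u_{m+1}(s)))$, and Cauchy--Schwarz for the drift, one obtains, for any $T>0$,
\[
	\mathbb E\sup_{t\in[0,T\wedge\tau]}\|w(t)\|_{H^\sigma}^2\leqslant C\bigl((1+m)^4T^2+(1+m)^2T\bigr)\,\mathbb E\sup_{t\in[0,T\wedge\tau]}\|w(t)\|_{H^\sigma}^2.
\]
Choosing $T$ small enough (depending only on $m$) that the prefactor is less than $1/2$ forces $\mathbb E\sup_{t\in[0,T\wedge\tau]}\|w\|_{H^\sigma}^2=0$, so $u_m=u_{m+1}$ on $[0,T\wedge\tau]$ almost surely. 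Iterating on consecutive subintervals of $[0,T_0]$ of length $T$ covers $[0,\tau]$ and yields the claim.

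The main obstacle I expect is the correct bookkeeping of the stopping time inside the stochastic integral: Burkholder must be applied to the integrand multiplied by $\mathbf 1_{[0,\tau]}$, so that the a priori $H^\sigma$-bound on $u_m$ and $u_{m+1}$ is genuinely available on the support of the integrand and the Lipschitz constants $C(1+m)^j$ above are legitimate. Once this localisation is in place, the iterative Gr\"onwall argument is entirely routine.
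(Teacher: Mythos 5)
Your proposal is correct and follows essentially the same route as the paper: on $[0,\tau]$ both truncations are inactive, so the difference satisfies a stopped Duhamel identity whose drift and stochastic parts are controlled by the Lipschitz bounds with constants depending on $m$, and a Burkholder-plus-small-time contraction followed by iteration gives coincidence. The only cosmetic difference is that the paper extends $u_m$ and $u_{m+1}$ past $\tau$ by the free flow $\mathcal S(t,\tau)u_i(\tau)$ so that the estimate can be run in $X_T$ over the full deterministic interval, whereas you keep the supremum over $[0,T\wedge\tau]$ and localise the integrand with $\mathbf 1_{[0,\tau]}$ -- both devices handle the stopping-time bookkeeping you correctly identified as the one delicate point.
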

\begin{proof}
We define
\[
	\widetilde u_i(t)
	=
	\left \{
		\begin{aligned}
			&
			u_i(t)
			& \mbox{if } &
			t \in [0, \tau]
			\\
			&
			\mathcal S(t, \tau) u_i(\tau)
			& \mbox{if } &
			t \in [\tau, T_0]
		\end{aligned}
	\right.
	,
	\quad
	i = m, m+1
	.
\]
At first we will show that
\(
	\widetilde u_m
\)
and
\(
	\widetilde u_{m+1}
\)
coincide in $X_T$ provided $T$ is sufficiently small.
Then we will finish the proof by an iteration procedure.
The difference of these functions has the form
\begin{multline*}
	\widetilde u_{m+1}(t) - \widetilde u_m(t)
	=
	\mathcal S(t, 0)
	\int_0^{ t \wedge \tau } \mathcal S(0, s)
	\left(
		f(\widetilde u_{m+1}(s)) - f(\widetilde u_m(s))
	\right)
	ds
	\\
	+
	\mathcal S(t, 0)
	\sum_j \gamma_j
	\int_0^{ t \wedge \tau } \mathcal S(0, s)
	\left(
		g(\widetilde u_{m+1}(s)) - g(\widetilde u_m(s))
	\right)
	dW_j(s)
	,
\end{multline*}
where the stochastic integral is estimated via
\begin{multline*}
	\mathbb E \sup_{ 0 \leqslant t \leqslant T }
	\norm
	{
		S_W(t, 0)
		\sum_j \gamma_j
		\int_0^t
		S(t - s) \chi_{ \{ s \leqslant \tau \} }(s) S_W(0, s)
		\left(
			g(\widetilde u_{m+1}(s)) - g(\widetilde u_m(s))
		\right)
		dW_j(s)
	}
	_{ H^{\sigma} }^2
	\\
	\leqslant
	C \mathbb E \int_0^T \chi_{ \{ s \leqslant \tau \} }(s)
	\norm
	{
		S_W(0, s)
		\left(
			g(\widetilde u_{m+1}(s)) - g(\widetilde u_m(s))
		\right)
	}
	_{ H^{\sigma} }^2
	ds
	\\
	\leqslant
	C \mathbb E \int_0^T \chi_{ \{ s \leqslant \tau \} }(s)
	\left(
		\norm
		{
			\widetilde u_{m+1}(s)
		}
		+	
		\norm
		{
			\widetilde u_m(s)
		}
		_{ H^{\sigma} }
	\right) ^2
	\norm
	{
		\widetilde u_{m+1}(s) - \widetilde u_m(s)
	}
	_{ H^{\sigma} }^2
	ds
	\\
	\leqslant
	C ( 2m + 1 )^2 T \mathbb E \sup_{[0, T]}
	\norm
	{
		\widetilde u_{m+1} - \widetilde u_m
	}
	_{ H^{\sigma} }^2
\end{multline*}
with the help of the Burkholder inequality for convolution
with the unitary group $S$,
see \cite[Lemma 3.3]{Gawarecki_Mandrekar}.
The first integral is estimated more straightforwardly,
notice a similar argument employed to $I$
in the proof of Proposition \ref{R_existence_proposition},
and so one obtains
\[
	\norm
	{
		\widetilde u_{m+1} - \widetilde u_m
	}
	_{ X_T }
	\leqslant
	C(m) \sqrt{T}
	\norm
	{
		\widetilde u_{m+1} - \widetilde u_m
	}
	_{ X_T }
	.
\]
Hence
\(
	\widetilde u_{m+1} = \widetilde u_m
\)
on $[0, T]$ for a.e. $\omega \in \Omega$
provided $T$ is chosen sufficiently small depending only on $m$.
Thus we can iterate this procedure to show that
\(
	\widetilde u_{m+1} = \widetilde u_m
\)
on the whole interval $[0, T_0]$,
which concludes the proof of the lemma.
\end{proof}

Our goal is to bound
\(
	\norm
	{
		u_m
	}
	_{ L^2 C( 0, T_1; H^{\sigma} ) }
\)
by a constant independent of $m \in \mathbb N$,
and so we will need to estimate
\(
	\norm{f(u_m)}_{H^{\sigma}}
	,
\)
\(
	\norm{g(u_m)}_{H^{\sigma}}
	,
\)
in particular.
This can be easily done with the help of
\[
	\norm{ \phi \psi }_{H^{\sigma}}
	\leqslant
	C( \sigma, \sigma_0 )
	\left(
		\norm{ \phi }_{H^{\sigma}}
		\norm{ \psi }_{H^{\sigma_0}}
		+
		\norm{ \phi }_{H^{\sigma_0}}
		\norm{ \psi }_{H^{\sigma}}
	\right)
\]
being true for any $\sigma \geqslant 0$ and $\sigma_0 > 1/2$,
see for example \cite[Estimate (3.12)]{Linares_Ponce}.

For a.e. $\omega \in \Omega$ and any $m \in \mathbb N$, $t \in [0, T_0]$
we have
\[
	\norm{u_m(t)}_{H^{\sigma}}
	\leqslant
	\norm{u_0}_{H^{\sigma}}
	+
	\int_0^t \norm{f(u_m(s))}_{H^{\sigma}} ds
	+
	\norm
	{
		\sum_j \gamma_j \int_0^t \mathcal S(0, s) g_m(u_m(s)) dW_j(s)
	}
	_{H^{\sigma}}
	,
\]
where
\(
	\norm{f(u_m(s))}_{H^{\sigma}}
	\leqslant
	C
	\left(
		\norm{u_m(s)}_{H^{\sigma_0}}
		+
		\norm{u_m(s)}_{H^{\sigma_0}}^2
	\right)
	\norm{u_m(s)}_{H^{\sigma}}
	.
\)
Now taking into account that
\(
	\norm
	{
		\mathcal S(0, s) g_m(u_m(s))
	}
	_{H^{\sigma}}
	\leqslant
	C
	\norm{u_m(s)}_{H^{\sigma_0}}
	\norm{u_m(s)}_{H^{\sigma}}
	,
\)
the stochastic integral can be estimated
by the  Burkholder inequality,
and so we obtain for any $0 < T \leqslant T_0$ the following inequality
\begin{equation}
\label{a_priori_estimate}
	\mathbb E \sup_{ t \in [0, T] } \norm{u_m(t)}_{H^{\sigma}}^2
	\leqslant	
	3 \mathbb E \norm{u_0}_{H^{\sigma}}^2
	+
	C \mathbb E \int_0^T
	\left(
		\norm{u_m(t)}_{H^{\sigma_0}}^2
		+
		\norm{u_m(t)}_{H^{\sigma_0}}^4
	\right)
	\norm{u_m(t)}_{H^{\sigma}}^2 dt
	,
\end{equation}
where $C$ depends only on $\sigma_0$, $\sigma$, $T_0$, $\sum_j \gamma_j^2$.
This inequality we will use iteratively on the intervals $[0, T_0 \wedge kT_1 ]$,
$k \in \mathbb N$, with $T_1$ found in Lemma \ref{energy_estimate_lemma}.
Let
\(
	\norm{u_0}_{ \mathcal H }
	\leqslant
	( 5 C_{ \mathcal H } )^{-1}
\)
a.e. on $\Omega$.
Consider the following stopping time
\[
	T_2^m = \inf
	\left \{
		t \in [0, T_0]
		:
		\norm{ u_m(t) }_{ \mathcal H } > ( 2 C_{ \mathcal H } )^{-1}
	\right \}
	.
\]
Then a.e. $T_1 \leqslant T_2^m$.
Indeed, assuming the contrary $T_1 > T_2^m$
one can deduce from \eqref{energy_norm_control}
and Lemma~\ref{energy_estimate_lemma} that
\[
	\norm{ u_m(T_2^m) }_{ \mathcal H }
	\leqslant
	\sqrt{ 2 \mathcal H( u_m(T_2^m) ) }
	\leqslant
	2 \sqrt{ \mathcal H( u_0 ) }
	\leqslant
	2 \sqrt{ 1 + C_{\mathcal H} \norm{ u_0 }_{ \mathcal H } }
	\norm{ u_0 }_{ \mathcal H }
	\leqslant
	\sqrt{ \frac{24}{125} } C_{ \mathcal H }^{-1}
	< ( 2 C_{ \mathcal H } )^{-1}
	,
\]
which contradicts to the definition of the stopping time $T_2^m$
due to continuity of $\norm{ u_m }_{ \mathcal H }$.
As a result $\norm{ u_m }_{ \mathcal H }$ stays bounded by $( 2 C_{ \mathcal H } )^{-1}$
on the interval $[0, T_1]$ for a.e. $\omega$,
and this simplifies \eqref{a_priori_estimate} in the following way
\[
	\mathbb E \sup_{ t \in [0, T] } \norm{u_m(t)}_{H^{\sigma}}^2
	\leqslant	
	3 \mathbb E \norm{u_0}_{H^{\sigma}}^2
	+
	C \int_0^T \mathbb E
	\sup_{s \in[0, t]}
	\norm{u_m(s)}_{H^{\sigma}}^2 dt
\]
holding true for any $0 < T \leqslant T_1$.
Hence by Gr\"onwall's lemma we obtain
\[
	\norm
	{
		u_m
	}
	_{ L^2 C( 0, T_1; H^{\sigma} ) }^2
	\leqslant
	3 \norm{u_0}_{L^2 H^{\sigma}}^2 e^{CT_1}
	= M
	,
\]
where $M$ does not depend on $m \in \mathbb N$.
Hence
\[
	\mathbb P ( \tau_m \geqslant T_1 )
	=
	\mathbb P
	\left(
		\norm
		{
			u_m
		}
		_{ C( 0, T_1; H^{\sigma} ) }
		\leqslant m
	\right)
	\geqslant
	1 - \frac 1{m^2} \mathbb E
	\norm
	{
		u_m
	}
	_{ C( 0, T_1; H^{\sigma} ) }^2
	\geqslant
	1 - \frac M{m^2}
	,
\]
and so
\(
	[0, T_1] \subset \cup_{m \in \mathbb N} [0, \tau_m(\omega)]
\)
for a.e. $\omega \in \Omega$.
Thus we can define $u$ on $[0, T_1]$ by assigning $u = u_m$ on $[0, \tau_m]$.
This is obviously a solution of \eqref{general_Duhamel} on $[0, T_1]$
satisfying $d \mathcal H(u) = 0$ and
\(
	\norm{ u }_{ \mathcal H } < ( 2 C_{ \mathcal H } )^{-1}
\)
for a.e. $\omega \in \Omega$.

Now one can repeat the argument on $[T_1, 2T_1]$ by constructing new solutions
$u_m$ of Equation \eqref{R_Duhamel} with the initial data $u(T_1)$ given
at the time moment $t_0 = T_1$.
The stopping times $\tau_m$ are defined by \eqref{stopping_time} with $t_0 = T_1$.
The fact that $\norm{ u_m }_{ \mathcal H }$
does not exceed the level $( 2 C_{ \mathcal H } )^{-1}$,
is guaranteed by the energy conservation, namely by
\(
	\mathcal H(u(T_1)) = \mathcal H(u_0)
\)
in the same manner as above.
The rest is similar,
and so we get a solution on $[T_1, 2T_1]$
with the constant energy equalled $\mathcal H(u_0)$.
After several repetitions of the argument we construct a solution on $[0, T_0]$.

It remains to prove the uniqueness.
Let
\(
	u_1, u_2 \in L^2( \Omega; C(0, T_0; H^{\sigma}(\mathbb R)) )
\)
solve Equation \eqref{general_Duhamel}.
For $R > 0$ we introduce
\[
	\tau_R = \inf
	\left \{
		t \in [0, T_0]
		:
		\max_{ i = 1,2 } \norm{ u_i(t) }_{H^{\sigma}} > R
	\right \}
	.
\]
Clearly,
for a.e. $\omega \in \Omega$ both $u_1$ and $u_2$
are solutions of \eqref{R_Duhamel} on $[0, \tau_R]$.
By Proposition \ref{R_existence_proposition}
it holds true that $u_1 = u_2$ on $[0, \tau_R]$
for a.e. $\omega \in \Omega$.
Taking $R \in \mathbb N$ and exploiting the time-continuity
of $u_1$, $u_2$ one obtains
\(
	u_1 = u_2
\)
on
\(
	[0, \lim_{R \to \infty} \tau_R]
\)
for a.e. $\omega \in \Omega$.
Now from sub-additivity and Chebyshev's inequality
we deduce
\[
	\mathbb P ( \tau_R \geqslant T_0 )
	=
	\mathbb P
	\left(
		\max_{ i = 1,2 } \norm{ u_i }_{C(0, T_0; H^{\sigma})} \leqslant R
	\right)
	\geqslant
	1 - \frac 1{R^2} \mathbb E
	\left(
		\norm{ u_1 }_{C(0, T_0; H^{\sigma})}^2
		+
		\norm{ u_2 }_{C(0, T_0; H^{\sigma})}^2
	\right)
	\to 1
\]
as $R \to \infty$, proving $u_1 = u_2$ on $[0, T_0]$.
This concludes the proof of Theorem \ref{main_theorem}.



\vskip 0.05in
\noindent
{\bf Acknowledgments.}
{
	The author is grateful to the members of STUOD team for
	fruitful discussions and numerous helpful comments.
	The author acknowledges the support of the ERC EU project 856408-STUOD.
}

\bibliographystyle{acm}
\bibliography{bibliography}

\end{document}